\def \msp {\vspace{-1ex}}
\newtheorem{thm}{Theorem}[section]
\newtheorem{lemma}[thm]{Lemma}
\def \cF {{\cal F}}
\def \cH {{\cal H}}
\def \cP {{\cal P}}
\begin{document}
\title{\vspace{-10ex} ~~ \\
Uniformly resolvable $(C_4, K_{1,3})$-designs of order $v$ and index 2}

\author {Mario Gionfriddo \thanks{Research supported by MIUR, Italy and CNR-GNSAGA}\\
\small Dipartimento di Matematica e Informatica \\
\small Universit\`a di Catania \\
\small Catania\\
\small Italia \\
{\small \tt gionfriddo@dmi.unict.it} \\  Selda K\"{u}\c{c}\"{u}k\c{c}\.{i}f\c{c}\.{i}
\thanks{Corresponding author, phone: $+90 \ 212 \ 338 \ 1523$, fax: $+90 \ 212 \ 338 \ 1559$}
\thanks{Research supported by Scientific and Technological Research Council of Turkey Grant Number: 114F505}\\
\small \msp Department of Mathematics \\
\small \msp Ko\c{c} University \\
\small \msp Istanbul\\
\small Turkey\\
{\small \vspace{1ex} \tt skucukcifci@ku.edu.tr} \\  Salvatore Milici
\thanks{Supported b by C. N. R. (G. N. S. A. G. A.), Italy}\\
\small \msp Dipartimento di Matematica e Informatica \\
\small \msp Universit\`a di Catania \\
\small \msp Catania\\
\small Italia\\
{\small \vspace{1ex} \tt milici@dmi.unict.it}
\\  E. \c{S}ule Yaz\i c\i \\
\small \msp Department of Mathematics \\
\small \msp Ko\c{c} University \\
\small \msp Istanbul\\
\small Turkey}

\maketitle

\begin{abstract}

In this paper we consider the uniformly resolvable decompositions of
the complete graph $2K_v$ into subgraphs where each resolution class
contains only blocks isomorphic to the same graph. We completely
determine the spectrum for the cases  in which all the resolution
classes are either $C_4$ or $K_{1,3}$.

\end{abstract}

\vbox{\small
\vspace{5 mm}
\noindent \textbf{AMS Subject classification:} $05C51$, $05C38$, $05C70$.\\
\textbf{Keywords:} Resolvable graph decomposition; uniform resolutions;
3-stars; $4$-cycles.
 }

\section{Introduction
 }\label{introduzione}

Given a collection of graphs $\cH$, an {\em $\cH-$decomposition} of
a graph $G$ is a decomposition of the edges of $G$ into isomorphic
copies of graphs in $\cH$. The copies of $H\in\cH$ in the
decomposition are called {\em blocks}. Such a decomposition is
called {\em resolvable} if it is possible to partition the blocks
into {\em classes} $\cP_i$ such that every point of $G$ appears
exactly once in some block of each $\cP_i$ \cite{G}.

A resolvable $\cH-$decomposition of $G$ is sometimes also referred
to as an {\em $\cH-$factorization of $G$} and a resolution class is
called an {\em $\cH$-factor of $G$}. The case where $\cH$ is a
single edge ($K_2$) is known as a {\em $1$-factorization of $G$} and
it is well known to exist for $G=K_v$ if and only if $v$ is even. A
single class of a $1$-factorization, a pairing of all points, is
also known as a {\em $1$-factor\/} or a {\em perfect matching}.

In many cases we wish to impose further constraints on
the classes of an $\cH$-decomposition.
 For example, a class is called {\em uniform\/} if every
block of the class is isomorphic to the same graph in $\cH$.
Uniformly resolvable decompositions of $K_v$ have also been studied
in \cite{DQS}, \cite{DLD}, \cite{GM} -- \cite{GM2}, \cite{KMT} --
\cite{S3}.

In this paper we study the existence of a uniformly resolvable
decomposition of $2K_v$  having the following type:
\begin{quote}
 $r$ classes containing only copies of 4-cycles
 and $s$ classes containing only copies of 3-stars.
\end{quote}
We will use the notation $(C_4, K_{1,3})$-URD$(v,2;r,s)$ for such
a uniformly resolvable decomposition of $2K_{v}$.
Let now

\begin{quote}
  $J(2K_v;C_4,K_{1,3}) = \{(r,s)$ : there exists a uniformly resolvable decomposition of
$2K_v$ into $r$ classes containing only copies of $C_4$ and $s$
classes containing only copies of $K_{1,3}\}$.
\end{quote}

For $v\geq4$, divisible by 4, define $I(v)$ according to the following table:

\vspace{4 mm}

 \begin{minipage}[t]{\textwidth}
\begin{center}
\begin{tabular}{|c|c|}
\hline
  $v $ &  {$I(v)$}
\\
\hline
$0 \pmod{12}$ & $\{(v-1-3x, 4x), x=0,1,\ldots,\frac{v-3}{3}\} $\\
$4 \pmod{12}$ & $\{(v-1-3x, 4x), x=0,1,\ldots,\frac{v-1}{3}\}$\\
$8 \pmod{12}$ & $\{(v-1-3x, 4x), x=0,1,\ldots,\frac{v-2}{3}\}$\\

\hline
 \end{tabular}

\bigskip

Table 1: The set $I(v)$.

 \end{center}\end{minipage}

\vspace{4 mm}

In this paper we completely solve the spectrum problem for such
 systems; that is, characterize the existence of uniformly resolvable
decompositions of $2K_{v}$  into $r$ classes of 4-cycles
and $s$ classes of 3-stars, by proving the following result:\\

\noindent \textbf{Main Theorem.} { \em
 For every integer\/ $v\geq4$, divisible by\/ $4$,
 the set\/ $J(2K_v;C_4$, $K_{1,3})$ is identical to the set\/ $I(v)$
  given in Table $1$.}\vspace{0.2in}

Now let us introduce some useful definitions, notations, results and
discuss constructions we will use in proving the main theorem. For
missing terms or results that are not explicitly explained in the
paper, the reader is referred to \cite{CD} and its online updates.
For some results below, we also cite this handbook instead of the
 original papers.

 For any four vertices $a_1,a_2,a_3,a_4$,
 let the {\em $3$-star\/}, $K_{1,3}$,
 be the simple graph with the vertex set
$\{a_1,a_2,a_3,a_4\}$ and the edge set $\{\{a_1,a_2\}, \{a_1,a_3\},
\{a_1,a_4\}\}$ and the 4-cycle $C_4$ be the simple graph with the
vertex set $\{a_1,a_2,a_3,a_4\}$ and the edge set $\{\{a_1,a_2\},
\{a_2,a_3\}, \{a_3,a_4\}, \{a_4,a_1\}\}$. In what follows, we will
denote the 3-star by  $(a_1;a_2,a_3,a_4)$ and the 4-cycle by
$(a_1,a_2,a_3,a_4)$, $(a_4,a_3,a_2,a_1)$ or any cyclic shift of
these.

 A resolvable $\cH$-decomposition of the
complete multipartite graph with $u$ parts each of size $g$  is
known as a resolvable group divisible design $\cH$-RGDD of type
$g^u$, the parts of size $g$ are called the groups of the design.
When $\cH = K_n$ we will call it an $n$-RGDD.

A $(C_4, K_{1,3})$-URGDD $(2;r,s)$ of type $g^u$  is a uniformly resolvable
decomposition of the complete multipartite graph of index 2 with $u$ parts
each of size $g$ into $r$ classes containing only copies of 4-cycles
and $s$ classes containing only copies of 3-stars.

If the blocks of an $\cH$-GDD of type ${g^u}$ can be partitioned
into partial parallel classes, each of which contain all points
except those of one group, we refer to the decomposition as a {\em
frame}. When $\cH = K_n$ we will call it an $n$-{\em frame} and it
can be deduced that the number of partial parallel classes missing a
specified group $G$ is $\frac{|G|}{n-1}$.

An incomplete resolvable $(C_4, K_{1,3})$-decomposition of $2K_v$
with a hole of size $h$ is a $(C_4, K_{1,3})$-decomposition of
$2K_{v+h}- 2K_h$ in which there are two types of classes, full
classes and partial classes which cover every point except those in
the hole (the points of $2K_h$ are referred to as the hole).
Specifically a $(C_4, K_{1,3})$-IURD$(2K_{v+h}- 2K_h; [r,s],
[\bar{r}, \bar{s}])$ is a uniformly resolvable $(C_4,
K_{1,3})$-decomposition of $2K_{v+h}-2K_h$ with $r$ partial classes
of $4$-cycles which cover only the points not in the hole, $s$
partial classes of $3$-stars which cover only the points not in the
hole, $\bar{r}$ full classes of $4$-cycles which cover every point
of $2K_{v+h}$ and $\bar{s}$ full classes of $3$-stars which cover
every point of $2K_{v+h}$.

We also need the following definitions. Let $(s_1, t_1)$ and $(s_2,
t_2)$ be two pairs of non-negative integers. Define $(s_1, t_1)
+(s_2, t_2)=(s_1+s_2, t_1+t_2)$. If $X$ and $Y$ are two sets of
pairs of non-negative integers, then $X+Y$ denotes the set $\{(s_1,
t_1) +(s_2, t_2) : (s_1, t_1)\in X, (s_2, t_2) \in Y \}$. If $X$ is
a set of pairs of non-negative integers and $h$ is a positive
integer, then  $h * X$ denotes the set of all pairs of non-negative
integers which can be obtained by adding any $h$ elements of $X$
together (repetitions of elements of $X$ are allowed).

Similarly to what was done in \cite{KLMT}, the following results can
be proven.

\begin{lemma}
\label{lemmaP1} If there exists a\/ $(C_4, K_{1,3})$-URD$(v,2;r,s)$ of\/
$2K_v$ with\/ $r>0$ and\/ $s>0$
 then\/ $v\equiv 0\pmod{4}$ and\/ $(r,s)\in I(v)$.
\end{lemma}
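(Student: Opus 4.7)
The plan is to derive both conclusions from a short counting argument, exactly analogous to the one used in \cite{KLMT} for the index-$1$ case. The key observation is that both $C_4$ and $K_{1,3}$ have exactly four vertices, so every block in every resolution class uses four vertices, regardless of type. Since each class partitions the $v$ vertices, this immediately forces $v \equiv 0 \pmod 4$.

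The main step is to count edges in $2K_v$, which has $v(v-1)$ edges. A $C_4$-class consists of $v/4$ blocks and therefore contributes $v$ edges, while a $K_{1,3}$-class consists of $v/4$ blocks and contributes $3v/4$ edges. Summing over all $r+s$ classes gives
\[
rv + s\cdot\tfrac{3v}{4} \;=\; v(v-1),
\]
which simplifies to
\[
4r + 3s \;=\; 4(v-1).
\]
From this identity, $3s = 4(v-1-r)$, and since $\gcd(3,4)=1$ we conclude $4 \mid s$. Writing $s=4x$ with $x$ a nonnegative integer, we obtain $r = v - 1 - 3x$, which is the parametric form appearing in Table~1.

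To finish, I would pin down the admissible range of $x$. The constraint $r \geq 0$ is equivalent to $x \leq (v-1)/3$, and since $x \in \mathbb{Z}$ we must take the integer floor of this bound. Because $v \equiv 0 \pmod 4$, the residue of $v$ modulo $3$ is determined by $v \pmod{12}$, and a quick check shows that $\lfloor (v-1)/3 \rfloor$ equals $(v-3)/3$, $(v-1)/3$, or $(v-2)/3$ according as $v \equiv 0$, $4$, or $8 \pmod{12}$. This matches the three rows of Table~1 exactly, so $(r,s) \in I(v)$.

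There is no real obstacle here; the lemma is purely arithmetic once one notes the uniform vertex-count of the two block types. The only place requiring a moment of care is the divisibility deduction $4 \mid s$ from $4r + 3s = 4(v-1)$, and then bookkeeping the integer ceiling in the three residue classes mod $12$.
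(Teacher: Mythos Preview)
Your argument is correct and is exactly the counting argument the paper intends: the authors do not spell out a proof but simply say ``Similarly to what was done in \cite{KLMT}, the following results can be proven,'' and your vertex- and edge-count (yielding $4\mid v$ and $4r+3s=4(v-1)$, hence $s=4x$, $r=v-1-3x$) is precisely that analogous argument adapted to index~$2$.
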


The following  lemma will be very useful in proving the main results
of this paper.

\begin{lemma}
\label{lemmaP2} Let\/ $v$, $g$, and\/ $u$   be non-negative integers
such that\/ $v=gu$. If there exists
\begin{itemize}
\item
[$(1)$] a\/ $4$-RGDD of type\/ $g^{u}$;

\item
[$(2)$] a $(C_4, K_{1,3})$-URD$(4,2;r_1,s_1)$  with\/ $(r_1, s_1)\in
J_1=\{(3,0), (0,4)\}$;

\item
[$(3)$] a $(C_4, K_{1,3})$-URD$(2K_g;r_2,s_2)$,\/ with\/ $(r_2,
s_2)\in J_2$, where $(r_2, s_2)\in J_2$ and $J_2=\{(r_2, s_2):$
there exists a $(C_4,K_{1,3})-URD(2K_g;r_2,s_2)\}$;
\end{itemize}
then there exists a\/ $(C_4, K_{1,3})$-URD$(2K_v;r,s)$  for each\/
$(r,s)\in J_2+ t\ast J_1$, where\/ $t=\frac{g(u-1)}{3}$ is the
number of parallel classes of the\/ $4$-RGDD of type\/ $g^{u}$ .

\end{lemma}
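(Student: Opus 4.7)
The plan is to decompose the edges of $2K_v$ into their within-group and between-group parts (relative to the $u$ groups of size $g$ coming from assumption (1)) and handle the two parts separately using ingredients (2) and (3). Concretely, I would take $V=I_g\times[u]$ with groups $G_i=I_g\times\{i\}$ and apply (1) to obtain $t=g(u-1)/3$ parallel classes $P_1,\ldots,P_t$ of $K_4$-blocks whose union is $K_{g\times u}$. The within-group edges of $2K_v$ then form $u$ disjoint copies of $2K_g$, while the between-group edges form $2K_{g\times u}$.

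For the within-group part, I would fix a bijection $G_i\to I_g$ for each $i$ and apply the same $(C_4,K_{1,3})$-URD$(2K_g;r_2,s_2)$ from (3) on every $G_i$. Since its local classes are parallel classes on $G_i$ and the $G_i$ partition $V$, taking the union over $i$ of the $j$-th local class yields a global parallel class on $V$. This contributes exactly $(r_2,s_2)\in J_2$ global classes and consumes all edges of the $u$ copies of $2K_g$.

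For the between-group part, the key observation is that \emph{doubling} the 4-RGDD turns every block $B\in P_i$ into a $2K_4$ on the four vertices of $B$. I would then, independently for each $i=1,\ldots,t$, choose a pair $(r_1^{(i)},s_1^{(i)})\in J_1$ and decompose every $2K_4$ arising from $P_i$ by a copy of the URD$(4,2;r_1^{(i)},s_1^{(i)})$ from (2), always listing $C_4$-local classes before $K_{1,3}$-local classes. The $j$-th local class taken across all blocks of $P_i$ combines into a global parallel class on $V$, so $P_i$ contributes exactly $(r_1^{(i)},s_1^{(i)})$ global classes; summing over $i=1,\ldots,t$ gives a total contribution ranging over $t\ast J_1$ as the choices vary.

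Putting both contributions together realises every $(r,s)\in J_2+t\ast J_1$ as $(r_2,s_2)$ ranges over $J_2$ and the $t$ pairs $(r_1^{(i)},s_1^{(i)})$ range independently over $J_1$. The only real step requiring thought is the doubling trick that lets one apply the index-2 ingredient (2) blockwise; after that, the remaining work is bookkeeping, namely fixing a common ordering of local classes within each $P_i$ (and within each $G_i$) so that they amalgamate into parallel classes of the whole vertex set $V$.
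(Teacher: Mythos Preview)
Your proposal is correct and is precisely the standard ``inflation'' construction the paper has in mind: the paper does not spell out a proof of this lemma but simply notes that it can be proven similarly to what was done in \cite{KLMT}, and the argument there is exactly the one you describe---blow up the blocks of the $4$-RGDD into copies of $2K_4$, fill each parallel class uniformly with a chosen URD from $J_1$, and fill the groups uniformly with a URD from $J_2$. Your bookkeeping remarks about fixing a common ordering of local classes so they amalgamate into global parallel classes are the only details one needs to add, and you have handled them correctly.
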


\section{Small cases}

\begin{lemma}
\label{lemmaA1} $J(2K_4;C_4,K_{1,3})$ = $\{(3,0), (4,0)\}$.
\end{lemma}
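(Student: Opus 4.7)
The plan is to separate necessity from sufficiency; both halves are elementary once one observes that on only four vertices, any parallel class of $C_4$ or of $K_{1,3}$ must consist of a single block, since a single copy already spans all four vertices.

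For necessity I would just count edges. The graph $2K_4$ has $12$ edges, each $C_4$-block uses $4$ edges, and each $K_{1,3}$-block uses $3$ edges, so any $(C_4,K_{1,3})$-URD$(4,2;r,s)$ must satisfy $4r+3s=12$. The only non-negative integer solutions are $(r,s)=(3,0)$ and $(r,s)=(0,4)$, matching the set claimed (understanding the second pair as $(0,4)$, consistent with the $v\equiv 4\pmod{12}$ row of Table~1). No further constraint from vertex degrees is required, but one can optionally double-check: for $(0,4)$ each vertex must be the centre of exactly one star, since writing $c$ for the number of stars in which it is central, $3c+(4-c)=6$ forces $c=1$.

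For sufficiency I would write down the two decompositions explicitly on $V=\{1,2,3,4\}$. For $(3,0)$, take the three distinct $4$-cycles
\[
(1,2,3,4),\qquad (1,2,4,3),\qquad (1,3,2,4);
\]
a routine check of the six edges of $K_4$ shows each appears in exactly two of these cycles. For $(0,4)$, take the four $3$-stars
\[
(i;\, V\setminus\{i\}),\qquad i=1,2,3,4,
\]
each of which is already a parallel class; every edge $\{i,j\}$ lies in precisely the star centred at $i$ and the star centred at $j$, so each edge of $2K_4$ is covered twice.

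The main obstacle is essentially nothing beyond verifying edge-multiplicities in these two tiny constructions; the real content of the lemma is simply that these are the only two pairs permitted by edge count and that both can be realised. This base case will be invoked repeatedly via Lemma~\ref{lemmaP2}, which is why it is isolated here.
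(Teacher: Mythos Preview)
Your proof is correct and mirrors the paper's approach almost exactly: the paper simply exhibits three explicit $4$-cycles for $(3,0)$ and develops the star $(0;1,2,3)$ cyclically for $(0,4)$, which is precisely your ``each vertex is the centre once'' construction. Your added edge-count argument for necessity is a harmless supplement (the paper defers necessity to Lemma~\ref{lemmaP1}), and you are right to read the stated pair $(4,0)$ as the typo $(0,4)$.
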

\begin{proof}
Let $V(K_{4})$=$\mathbb{Z}_{4}$.

\begin{itemize}
\item $(3,0)$

 $3$ classes of $4$-cycles are: $\{(0,1,2,3)\}$, $\{(0,2,3,1)\}$, $\{(0,2,1,3)\}$.

 \item $(0,4)$

 $4$ classes of $3$-stars can be obtained from the base block $\{\{0;1,2,3\}\}$.
\end{itemize}
\end{proof}

\begin{lemma}
\label{lemmaA3} $J(2K_8;C_4,K_{1,3})$ = $\{(7,0), (4,4), (1,8)\}$.
\end{lemma}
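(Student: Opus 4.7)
The plan is, first, to prove necessity by extracting the three candidate pairs and ruling out the degenerate cases. Since $8 \equiv 8 \pmod{12}$, the table defining $I(v)$ gives $I(8) = \{(7-3x, 4x) : x = 0, 1, 2\} = \{(7,0), (4,4), (1,8)\}$, and Lemma~\ref{lemmaP1} forces $(r,s) \in I(8)$ whenever $r,s > 0$. For the remaining cases I would argue that $s = 0$ forces a $C_4$-factorization of $2K_8$, and since $2K_8$ is $14$-regular while each $C_4$-factor is $2$-regular, necessarily $r = 7$; while $r = 0$ is impossible because a $K_{1,3}$-decomposition of $2K_8$ would require $3 \mid |E(2K_8)| = 56$.

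For sufficiency of $(7,0)$ and $(4,4)$, I would use the bipartition $V = A \cup B$ with $|A| = |B| = 4$ to split $2K_8 = 2K_A \oplus 2K_B \oplus 2K_{A,B}$. Since $K_{4,4}$ admits a $C_4$-factorization into two factors, $2K_{A,B}$ produces four $C_4$-factors of $V$. By Lemma~\ref{lemmaA1}, each of $2K_A$ and $2K_B$ independently admits a $(C_4, K_{1,3})$-URD with parameters either $(3,0)$ or $(0,4)$; the corresponding resolution classes cover only one half of $V$, so I would pair classes of the same type across the two halves to form factors of $2K_8$. Choosing $(3,0)$ on both halves yields three $C_4$-factors, which together with the four from $2K_{A,B}$ give $(7,0)$; choosing $(0,4)$ on both halves yields four $K_{1,3}$-factors, which together with the four from $2K_{A,B}$ give $(4,4)$.

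The main obstacle will be constructing $(1,8)$. The bipartition approach produces only $r \in \{3,7\}$ and $s \in \{0,4\}$, and Lemma~\ref{lemmaP2} is unavailable since no $4$-RGDD of type $g^u$ with $gu = 8$ exists (for instance, $\mathrm{TD}(4,2)$ does not exist). I would therefore give a direct ad hoc construction on $V = \mathbb{Z}_8$ (or on $\mathbb{Z}_7 \cup \{\infty\}$): fix one $C_4$-factor such as $\{(0,2,4,6), (1,3,5,7)\}$ and exhibit eight explicit $K_{1,3}$-factors, ideally as cyclic orbits of one or two carefully chosen base factors under a translation, subject to the constraint that each vertex serves as centre in exactly two of the eight star factors (forced by the degree identity $2 + 3c(v) + (8 - c(v)) = 14$, giving $c(v) = 2$). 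Verification then reduces to a routine check that every edge of $2K_8$ is covered the correct number of times.
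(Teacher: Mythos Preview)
Your approach is correct and essentially matches the paper's: the paper also obtains $(7,0)$ and $(4,4)$ by decomposing $2K_{4,4}$ into four $C_4$-factors (phrased there as a $(C_4,K_{1,3})$-URGDD$(2;4,0)$ of type $4^2$) and filling each group with the same design from Lemma~\ref{lemmaA1}, and it handles $(1,8)$ by an explicit ad hoc list on $\mathbb{Z}_8$. Your necessity discussion is in fact more careful than the paper's proof (which records only the constructions), and your cyclic idea for $(1,8)$ does go through---for example, the base factor $\{(0;1,3,4),(5;2,6,7)\}$ developed over $\mathbb{Z}_8$, together with the $C_4$-factor $\{(0,2,4,6),(1,3,5,7)\}$, realises exactly the required difference multiset.
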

\begin{proof}
Let $V(K_{8})$=$\mathbb{Z}_{8}$.

\begin{itemize}
\item $(7,0), (4,4)$

Take a   $(C_4, K_{1,3})$-URGDD$(2;4,0)$ of type $4^2$ \cite{DQS}
and replace  each  group of size $4$ with the same $(C_4,
K_{1,3})$-URD$(2K_4;r,s)$, with $(r,s)\in \{(3,0), (0,4)\}$ which
exists by Lemma \ref{lemmaA1}.

\item $(1,8)$

$8$ classes of $3$-stars and one class of $4$-cycles are:\\
$\{(0;2,4,6),(1;3,5,7)\}$, $\{(2;4,1,6),(3;5,0,7)\}$, $\{(5;2,0,7),(4;1,3,6)\}$,\\
$\{(0;2,1,3),(4;6,5,7)\}$, $\{(2;4,1,3),(6;5,0,7)\}$, $\{(5;2,0,7),(1;4,3,6)\}$,\\
$\{(1,5,4,0), (2,6,7,3)\}$.

\end{itemize}
\end{proof}

\begin{lemma}
\label{lemmaA2} $J(2K_{12};C_4,K_{1,3})$ = $\{(11,0), (8,4), (5,8),
(2,12)\}$.
\end{lemma}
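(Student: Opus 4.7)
By Lemma \ref{lemmaP1}, any $(C_4,K_{1,3})$-URD$(12,2;r,s)$ must have $(r,s)\in I(12)$, and the defining table gives $I(12)=\{(11,0),(8,4),(5,8),(2,12)\}$; this establishes necessity. For sufficiency I plan to exhibit each of the four URDs explicitly, in the style already used in Lemma \ref{lemmaA3}, working on the vertex set $V=\mathbb{Z}_{11}\cup\{\infty\}$. Note that the construction machinery of Lemma \ref{lemmaP2} does not directly apply at $v=12$: the only 4-RGDDs of order $12$ have type $3^4$ or $6^2$, and in each case the relevant $J_2=J(2K_g;C_4,K_{1,3})$ is empty since neither $C_4$ nor $K_{1,3}$ fits in fewer than four vertices. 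So the four cases must be handled by hand.

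For the extreme case $(11,0)$ I would construct a $C_4$-factorization of $2K_{12}$ via a cyclic starter on $\mathbb{Z}_{11}\cup\{\infty\}$. A starter parallel class must consist of one $C_4$ through $\infty$ together with two $C_4$'s on $\mathbb{Z}_{11}$, and a simple counting argument (each of the five non-zero differences of $\mathbb{Z}_{11}$ must be covered twice by the starter's ten non-$\infty$ edges, and each $\infty$-neighbour must appear with multiplicity $2$) pins down a small finite search that yields an explicit starter. For the opposite extreme $(2,12)$ I would list the two $C_4$-parallel classes (accounting for $24$ edges) and the $12$ $K_{1,3}$-parallel classes (covering the remaining $108$ edges) directly, verifying that every pair of vertices is covered exactly twice, just as was done for $(1,8)$ in Lemma \ref{lemmaA3}.

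For the two intermediate cases, the most elegant strategy is a \emph{three-for-four trade}: one identifies three $C_4$-factors of $2K_{12}$ whose union is a $6$-regular graph on $12$ vertices admitting a decomposition into four $K_{1,3}$-factors. A degree count forces each vertex to be the centre of exactly one of those four $3$-star factors, which is the key structural constraint. Performing the trade once inside the $(11,0)$ design yields $(8,4)$, twice yields $(5,8)$, and three times would recover $(2,12)$. The principal obstacle is arranging the starter so that an appropriate trade-compatible triple of $C_4$-factors is visible; should this prove awkward, the routine fallback is to write down four independent constructions on $\mathbb{Z}_{12}$ (or on $\mathbb{Z}_{11}\cup\{\infty\}$) and check edge coverage block by block, which at this small order is a finite verification of the same flavour as Lemma \ref{lemmaA3}.
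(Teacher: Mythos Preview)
Your necessity argument via Lemma~\ref{lemmaP1} matches the paper. For sufficiency, however, your plan diverges from the paper's in a way worth noting.

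You correctly observe that Lemma~\ref{lemmaP2} does not apply here, but you then conclude that all four pairs must be built by hand. The paper instead exploits a tool you overlooked: a $(C_4,K_{1,3})$-URGDD$(2;8,0)$ of type $4^3$ (from \cite{DQS}), i.e.\ a resolvable $C_4$-decomposition of $2K_{4,4,4}$ into eight $C_4$-classes. Filling each group with the same copy of a $(C_4,K_{1,3})$-URD$(2K_4;r,s)$ for $(r,s)\in\{(3,0),(0,4)\}$ immediately yields both $(11,0)$ and $(8,4)$ with no search at all. This is cleaner than your cyclic starter on $\mathbb{Z}_{11}\cup\{\infty\}$ followed by a trade, and it explains why the paper only needs explicit block lists for $(5,8)$ and $(2,12)$ (both given on $\mathbb{Z}_{12}$, not $\mathbb{Z}_{11}\cup\{\infty\}$).

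Your three-for-four trade idea is structurally sound---the degree count forcing each vertex to be a centre exactly once is correct---but as you yourself flag, the existence of a trade-compatible triple of $C_4$-factors inside a given factorization is not automatic, and you have not exhibited one. So at present your $(8,4)$ and $(5,8)$ cases rest on a search you have described but not carried out. That is not wrong, but it is more work than the paper's route: the URGDD shortcut handles $(8,4)$ for free, leaving only $(5,8)$ to be listed explicitly. If you want to keep your trade framework, you should either display the actual trades or fall back, as you suggest, to direct listings; the paper simply takes the latter option for the two cases its recursive machinery does not reach.
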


\begin{proof}
Let $V(K_{12})$=$\mathbb{Z}_{12}$.

\begin{itemize}
\item $(11,0), (8,4)$

Take a   $(C_4, K_{1,3})$-URGDD$(2;8,0)$ of type $4^3$ \cite{DQS}
and replace  each  group of size 4 with the same $(C_4,
K_{1,3})$-URD$(2K_4;r,s)$, with $(r,s)\in \{(3,0), (0,4)\}$, which
exists by Lemma \ref{lemmaA1}.

\item $(5,8)$

$5$ classes of $4$-cycles and $8$ classes of $3$-stars are:\\
$\{(0,1,4,7), (2,3,6,5), (8,11,9,10)\}$, $\{(0,11,10,3), (1,2,9,8), (4,6,7,5)\}$,\\
$\{(3,1,4,8), (2,0,6,10), (7,11,9,5)\}$,$\{(3,11,5,0), (1,2,9,7), (4,6,8,10)\}$,\\
$\{(1,3,2,0), (4,8,11,7), (6,10,9,5)\}$,\\
$\{(0;4,5,6)$, $(7;8,9,10)$, $(11;1,2,3)$\}$, $\{ $(1;5,6,7)$, $(4;9,10,11)$, $(8;0,2,3)\}$,\\
$\{(2;4,6,7)$, $(5;8,10,11)$, $(9;0,1,3)$\}$, $\{$(3;4,5,7)$, $(6;8,9,11)$, $(10;0,1,2)\}$,\\
$\{(3;4,10,6)$, $(8;7,9,5)$, $(11;1,2,0)$\}$, $\{ $(1;10,6,8)$, $(4;9,5,11)$, $(7;0,2,3)\}$,\\
$\{(2;4,6,8)$, $(10;7,5,11)$, $(9;0,1,3)$\}$, $\{$(0;4,10,8)$,
$(6;7,9,11)$, $(5;3,1,2)\}$.

\item $(2,12)$

$2$ classes of $4$-cycles are:\\
$\{(0,5,6,1), (2,7,8,3), (4,9,10,11)  \}$, $\{(0,7,6,11), (1,8,9,2),
( 3,4,5,10) \}$. $12$ classes of $3$-stars can be obtained from the
base blocks:

$\{(4;10,1,6),  (9;2,5,7), (11;3,8,0)\}$.

\end{itemize}
\end{proof}

\begin{lemma}
\label{lemmaA4} There exists a $(C_4, K_{1,3})$-URGDD$(2;r,s)$ of
type $12^{2}$ with $(r,s)\in \{(12,0), (6,8), (0,16)\}$.
\end{lemma}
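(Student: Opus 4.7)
The aim is to construct a $(C_4, K_{1,3})$-URGDD$(2;r,s)$ of type $12^2$ for each of the three target pairs. Write the two groups as $A$ and $B$, each of size $12$, so the underlying graph is $2K_{12,12}$ with $288$ edges. A $C_4$-factor on these $24$ vertices contains $6$ cycles and uses $24$ edges, while a $K_{1,3}$-factor contains $6$ stars and uses $18$ edges; thus $24r+18s=288$ holds for each of $(12,0),(6,8),(0,16)$, which is a useful sanity check before we look for constructions.

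For the pair $(12,0)$ the task is a $C_4$-factorization of $2K_{12,12}$. Since a $C_4$-factorization of $K_{n,n}$ is known to exist for every even $n\ge 4$ (develop a base $4$-cycle under a cyclic $\ZZ_{n}$-action on one side), I would take two copies of such a factorization of $K_{12,12}$ to produce the required $12$ factors.

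For the pair $(0,16)$ the task is a $K_{1,3}$-factorization of $2K_{12,12}$. A degree count shows every vertex must be a star-centre in exactly $4$ of the $16$ classes, and bipartiteness forces every class to have exactly $3$ centres in $A$ and $3$ centres in $B$. I would produce the index-$1$ analogue first: a $K_{1,3}$-factorization of $K_{12,12}$ into $8$ classes in which every vertex is a centre twice, obtained by developing a suitable base class modulo $12$ on each side. Doubling yields the $16$ required classes.

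For the mixed pair $(6,8)$ I would try to build a $(C_4,K_{1,3})$-URGDD of type $12^2$ at index $1$ with parameters $(3,4)$ and then double it. At index $1$ each vertex has degree $12$; the three $C_4$-factors absorb degree $6$ per vertex, so the remaining bipartite graph is $6$-regular and must split into four $K_{1,3}$-factors, forcing each vertex to serve as a star-centre exactly once. The plan is to put down the three $C_4$-factors from an edge-balanced family of base cycles on $\ZZ_{12}$, then decompose the residual $6$-regular bipartite graph into four star-factors by selecting, class by class, disjoint triples of leaves around a chosen set of centres on the opposite side.

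The principal obstacle is the $(6,8)$ case: one must simultaneously respect the bipartite orientation, the $4r+3s=48$ edge count \emph{and} the centre-count constraint in the star-factors, so an abstract existence argument is unlikely to suffice. I would resolve it with an explicit listing (using a $\ZZ_{12}$ cyclic action on one group, where possible, to compress the bookkeeping) and verify by direct inspection that the three $C_4$-factors and four $K_{1,3}$-factors together cover every bipartite edge exactly once before doubling to index $2$.
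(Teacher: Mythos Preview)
Your plan is reasonable and would succeed, but it is organized differently from the paper's proof. The paper works directly at index~$2$ throughout: for $(12,0)$ it cites the literature; for $(0,16)$ it writes down sixteen explicit $K_{1,3}$-classes (built as four classes for each of the parameter values $i=1,4,7,10$); and for $(6,8)$ it exploits a nice synergy---it keeps the \emph{last eight} of those sixteen star-classes unchanged and only has to exhibit six $C_4$-classes covering their complement in $2K_{12,12}$. Your route, by contrast, reduces each case to an index-$1$ factorization of $K_{12,12}$ and then doubles. That is conceptually clean, and it works for $(12,0)$ and $(0,16)$ since $C_4$- and $K_{1,3}$-factorizations of $K_{12,12}$ both exist (the divisibility you need for the star case is $12\equiv 0 \pmod{4}$ and $12\equiv 0\pmod{3}$, so that each class has exactly three centres on each side and there are exactly eight classes). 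What the doubling approach gives up is the paper's reuse trick: instead of having to locate a $(3,4)$-URGDD of $K_{12,12}$ from scratch, the authors only have to find six $C_4$-classes compatible with an already-constructed block of eight star-classes, which is a smaller search.

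One caution about your $(6,8)$ sketch: after removing three edge-disjoint $C_4$-factors from $K_{12,12}$, the residual $6$-regular bipartite graph is \emph{not} automatically $K_{1,3}$-factorable---that depends on which $C_4$-factors you chose---so the ``then decompose the residual'' step is not a routine consequence and really does require the explicit listing you promise. You acknowledge this, but be aware that working at index~$1$ here is strictly more constrained than the paper's index-$2$ search (no repeated edges are available to absorb slack), so if you implement your plan, expect the $(3,4)$ listing to be the hardest part; the paper sidesteps exactly this difficulty by staying at index~$2$ and recycling half of the $(0,16)$ solution.
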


\begin{proof} Take the groups to be $\{a_1,a_2\ldots, a_{12}\}$ and
$\{b_1,b_2,\ldots, b_{12}\}$.

\begin{itemize}

\item The case $(12,0)$ follows by \cite{DQS}.

\item $(0,16)$

$12$ parallel classes of $4$-cycles are obtained by considering
 $i=1,4,7, 10$ as listed below:\\
$\{(a_i;b_i,b_{i+1},b_{i+2}), (a_{i+1};b_{i+3},b_{i+4},b_{i+5})
,(a_{i+2};b_{i+6},b_{i+7},b_{i+8})$,\\
$(b_{i+9};a_{i+3},a_{i+4},a_{i+5}) ,
(b_{i+10};a_{i+6},a_{i+7},a_{i+8}), (b_{i+11};a_{i+9},a_{i+10},a_{i+11}) \}$,\\
$\{(a_i; b_{i+3},b_{i+4},b_{i+5}), (a_{i+1};b_{i+6},b_{i+7},b_{i+8})
,(a_{i+2};b_{i+9},b_{i+10},b_{i+11}),\\
(b_{i};a_{i+3},a_{i+4},a_{i+5}), (b_{i+1};a_{i+6},a_{i+7},a_{i+8}), (b_{i+2};a_{i+9},a_{i+10},a_{i+11}) \}$,\\
$\{(a_i; b_{i+6},b_{i+7},b_{i+8}), (a_{i+1};b_{i+9},b_{i+10},b_{i+11}) ,(a_{i+2};b_{i},b_{i+1},b_{i+2}), \\
(b_{i+3};a_{i+3},a_{i+4},a_{i+5}), (b_{i+4};a_{i+6},a_{i+7},a_{i+8}), (b_{i+5};a_{i+9},a_{i+10},a_{i+11}) \}$,\\
$\{(a_i; b_{i+9},b_{i+10},b_{i+11}), (a_{i+1};b_{i},b_{i+1},b_{i+2})
,(a_{i+2};b_{i+3},b_{i+4},b_{i+5}),\\
(b_{i+6};a_{i+3},a_{i+4},a_{i+5}), (b_{i+7};a_{i+6},a_{i+7},a_{i+8}), (b_{i+8};a_{i+9},a_{i+10},a_{i+11}) \}$.\\

\item $(6,8)$

$6$ parallel classes of $4$-cycles are:\\
$\{(a_{12},b_{12},a_{5},b_1),(a_{1},b_2,a_{7},b_3),(a_{2},b_4,a_{8},b_5),(a_{3},b_7,a_{4},b_8)$,\\
$(a_{6},b_9,a_{9},b_{10}),(a_{10},b_6,a_{11},b_{11})\}$\\
$\{(a_{12},b_2,a_{8},b_3),(a_{1},b_4,a_{3},b_5),(a_{2},b_6,a_{11},b_7),(a_{4},b_8,a_{6},b_{11})$,\\
$(a_{5},b_9,a_{7},b_{10}),(a_{9},b_{12},a_{10},b_1)\}$\\
$\{(a_{12},b_2,a_{2},b_7),(a_{1},b_4,a_{9},b_6),(a_{3},b_5,a_{4},b_9),(a_{5},b_{12},a_{7},b_{10})$,\\
$(a_{6},b_8,a_{8},b_{11}),(a_{10},b_1,a_{11},b_3)\}$\\
$\{(a_{12},b_4,a_{10},b_5),(a_{1},b_6,a_{6},b_7),(a_{2},b_9,a_{4},b_{10}),(a_{3},b_3,a_{11},b_8)$,\\
$(a_{5},b_2,a_{9},b_{11}),(a_{7},b_{12},a_{8},b_1)\}$\\
$\{(a_{12},b_4,a_{11},b_5),(a_{1},b_7,a_{5},b_9),(a_{2},b_8,a_{7},b_{11}),(a_{3},b_{12},a_{4},b_6)$,\\
$(a_{6},b_1,a_{8},b_{10}),(a_{9},b_2,a_{10},b_3)\}$\\
$\{(a_{12},b_6,a_{2},b_9),(a_{1},b_5,a_{5},b_8),(a_{3},b_7,a_{4},b_{10}),(a_{6},b_{12},a_{8},b_3)$,\\
$(a_{7},b_1,a_{9},b_{11}),(a_{10},b_2,a_{11},b_4)\}$, and $8$
parallel classes of $3$-stars are the last $8$ parallel classes of
the solution for $(0,16)$.

\end{itemize}

\end{proof}

\begin{lemma}
\label{lemmaA5} $J(24,2;C_4,K_{1,3})$ = $I(24)$.
\end{lemma}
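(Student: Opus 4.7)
The plan is to prove the two containments separately. For $J(24,2;C_4,K_{1,3})\subseteq I(24)$, Lemma \ref{lemmaP1} directly disposes of all $(r,s)$ with $r,s>0$; the boundary cases are handled by a short edge count, since $24r+18s=2\binom{24}{2}=552$ forces $r=23$ when $s=0$ (and $(23,0)\in I(24)$) while $r=0$ would require $18s=552$, which has no integer solution.

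The substance of the proof is the reverse containment, for which I would use the standard filling construction on a URGDD of type $12^2$. Start with a $(C_4,K_{1,3})$-URGDD$(2;r,s)$ of type $12^2$ guaranteed by Lemma \ref{lemmaA4}, where $(r,s)\in\{(12,0),(6,8),(0,16)\}$, and independently fill each of its two size-$12$ groups with a $(C_4,K_{1,3})$-URD$(2K_{12};r',s')$ from Lemma \ref{lemmaA2}, where $(r',s')\in\{(11,0),(8,4),(5,8),(2,12)\}$. Choose the same $(r',s')$ on both groups so that each $C_4$-class on one group pairs with a $C_4$-class on the other (and each $K_{1,3}$-class with a $K_{1,3}$-class) into a full resolution class of $2K_{24}$ of matching type. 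The resulting decomposition is a $(C_4,K_{1,3})$-URD$(2K_{24};r+r',s+s')$.

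It then remains to check that the eight achievable sums cover $I(24)=\{(23-3x,4x):x=0,1,\ldots,7\}=\{(23,0),(20,4),(17,8),(14,12),(11,16),(8,20),(5,24),(2,28)\}$. Pairing $(12,0)$ with each of the four URD parameters gives the first four elements $(23,0),(20,4),(17,8),(14,12)$; pairing $(6,8)$ with $(5,8)$ and with $(2,12)$ gives $(11,16)$ and $(8,20)$; pairing $(0,16)$ with $(5,8)$ and with $(2,12)$ gives $(5,24)$ and $(2,28)$. This exhausts $I(24)$, so no further small cases need to be constructed. The only mild obstacle is the bookkeeping above: verifying that the three URGDD choices from Lemma \ref{lemmaA4} together with the four URD choices from Lemma \ref{lemmaA2} have enough diversity to realize every target pair, which the enumeration confirms.
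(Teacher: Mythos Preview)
Your proof is correct and follows essentially the same filling construction as the paper: a $(C_4,K_{1,3})$-URGDD$(2;r,s)$ of type $12^2$ from Lemma~\ref{lemmaA4}, with both groups filled identically by a $(C_4,K_{1,3})$-URD$(2K_{12};r',s')$ from Lemma~\ref{lemmaA2}. The only difference is that the paper uses just $(r,s)\in\{(12,0),(0,16)\}$ at the URGDD level---pairing $(0,16)$ with $(11,0)$ and $(8,4)$ already yields $(11,16)$ and $(8,20)$---so your use of the $(6,8)$ ingredient is unnecessary, though harmless.
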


\begin{proof}

Take a   $(C_4, K_{1,3})$-URGDD$(2; r,s)$ of type $12^2$ and index 2
with $(r,s)\in \{(12,0), (0,16)\}$ which exists by Lemma
\ref{lemmaA4}. Replace each group of size 12 with the same $(C_4,
K_{1,3})$-URD$(2K_{12};r,s)$, where $(r,s)\in \{(11,0), (8,4),
(5,8), (2,12)\}$ which exists by Lemma \ref{lemmaA2}.

\end{proof}
\begin{lemma}
\label{lemmaA6} $J(36,2;C_4,K_{1,3})$ = $I(36)$.
\end{lemma}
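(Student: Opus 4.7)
The plan is to mirror Lemma \ref{lemmaA5}. There one took a $(C_4, K_{1,3})$-URGDD$(2; r, s)$ of type $12^2$ covering the inter-group edges and filled each of the two groups of size $12$ with identical copies of a $(C_4, K_{1,3})$-URD$(2K_{12}; r', s')$ from Lemma \ref{lemmaA2}, fusing the partial classes of each type (one per group) into full parallel classes. For $v = 36$ I would do the same one level up: construct a $(C_4, K_{1,3})$-URGDD$(2; R, S)$ of type $12^3$ (that is, a uniformly resolvable decomposition of the inter-group graph $2K_{12,12,12}$) and then fill each of its three groups with the same $(C_4, K_{1,3})$-URD$(2K_{12}; r', s')$ taken from Lemma \ref{lemmaA2}. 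Using identical URDs in all three groups guarantees matching numbers of $C_4$- and $K_{1,3}$-partial classes, so one partial class from each group can be fused into a full parallel class of $2K_{36}$; the resulting decomposition is a $(C_4, K_{1,3})$-URD$(2K_{36}; R+r', S+s')$.

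An edge count forces $4R + 3S = 96$ for URGDDs of type $12^3$, and it suffices to exhibit URGDDs for the three pairs $(R, S) \in \{(24,0), (12,16), (0,32)\}$. Combined with the four pairs $(r', s') \in J(2K_{12};C_4,K_{1,3}) = \{(11,0),(8,4),(5,8),(2,12)\}$ of Lemma \ref{lemmaA2}, the twelve sums cover exactly $I(36)$:
\[
\{(24,0),(12,16),(0,32)\} + \{(11,0),(8,4),(5,8),(2,12)\} = I(36),
\]
since $(24,0)$ contributes $(35,0),(32,4),(29,8),(26,12)$, the middle pair $(12,16)$ contributes $(23,16),(20,20),(17,24),(14,28)$, and $(0,32)$ contributes $(11,32),(8,36),(5,40),(2,44)$, which are precisely the elements $(35-3x, 4x)$ of $I(36)$ for $x = 0, 1, \ldots, 11$.

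The principal obstacle is the explicit construction of the three URGDDs of type $12^3$. I would carry it out in the spirit of Lemma \ref{lemmaA4}: place the $36$ points as $\Z_{12} \times \{A, B, C\}$ and list base parallel classes that are invariant under the cyclic shift $(i, X) \mapsto (i+1, X)$. The $(24, 0)$ URGDD is a resolvable $C_4$-decomposition of $2K_{12,12,12}$ into $24$ classes of nine $4$-cycles each, the $(0, 32)$ URGDD is its $K_{1,3}$-analogue with $32$ classes of nine $3$-stars each, and the mixed $(12, 16)$ URGDD is either listed directly or extracted from the pure $C_4$-construction by isolating a suitable collection of $4$-cycle classes whose edges can be regrouped and re-decomposed as $3$-star classes via the $2K_4 = 3\,C_4 = 4\,K_{1,3}$ identity of Lemma \ref{lemmaA1}. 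Once these three URGDDs are exhibited, the fusion step is routine bookkeeping identical to the proof of Lemma \ref{lemmaA5}.
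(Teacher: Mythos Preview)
Your plan is the paper's plan: build $(C_4,K_{1,3})$-URGDD$(2;R,S)$'s of type $12^3$ on the inter-group edges and fill each of the three groups with the same URD$(2K_{12};r',s')$ from Lemma~\ref{lemmaA2}, fusing the three matching partial classes into full ones. The paper cites Lemma~\ref{lemmaA4} for the type-$12^3$ URGDDs and records only $(R,S)\in\{(24,0),(0,32)\}$, whereas you add $(12,16)$ and propose to construct all three directly via base classes. Your bookkeeping is in fact the careful one: the two extreme URGDD values combined with the four fills $(r',s')\in\{(11,0),(8,4),(5,8),(2,12)\}$ yield only the eight pairs $(35-3x,4x)$ with $x\in\{0,1,2,3,8,9,10,11\}$, so an intermediate URGDD such as $(12,16)$ is genuinely required to reach $x\in\{4,5,6,7\}$. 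Apart from this refinement and the choice to exhibit the URGDDs explicitly rather than invoke Lemma~\ref{lemmaA4}, the two arguments are identical.
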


\begin{proof}

Take a   $(C_4, K_{1,3})$-URGDD$(2; r,s)$ of type $12^3$ and index 2
with $(r,s)\in \{(24,0), (0,32)\}$ which exists by Lemma
\ref{lemmaA4}. Replace each group of size $12$ with the same $(C_4,
K_{1,3})$-URD$(2K_{12};r,s)$, where $(r,s)\in \{(11,0), (8,4),
(5,8), (2,12)\}$ which exists by Lemma \ref{lemmaA2}.

\end{proof}

\begin{lemma}
\label{lemmaA7}  There exists a \/ $(C_4,
K_{1,3})$-IURD$(2K_{20}-2K_8;[r,s],[\bar{r},\bar{s}])$ with
$(r,s)\in \{(7,0), (4,4), (1,8)\}$ and $(\bar{r},\bar{s})\in
\{(12,0), (9,4), (6,8), (3,12), (0,16)\}$.
\end{lemma}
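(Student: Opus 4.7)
The plan is to separate the edges of $2K_{20}-2K_8$ as the edge-disjoint union of $2K_{12}$ on the non-hole set $V_1$ ($132$ edges) and the bipartite graph $2K_{12,8}$ between $V_1$ and the hole $H$ ($192$ edges). A direct count shows that each of the three admissible partial profiles $(r,s)\in\{(7,0),(4,4),(1,8)\}$ consumes exactly $84$ edges of $2K_{12}$: indeed $12\cdot 7=12\cdot 4+9\cdot 4=12+9\cdot 8=84$. Hence in every case precisely $48$ edges of $2K_{12}$ must be absorbed into the full classes. Likewise, each of the five full profiles $(\bar r,\bar s)\in\{(12,0),(9,4),(6,8),(3,12),(0,16)\}$ contributes $240$ edges overall, split as $48$ internal-to-$V_1$ plus $192$ bipartite. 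At each non-hole vertex the degree splits as $14$ (partial) $+8$ (internal in full) $+16$ (bipartite in full) $=38$, and at each hole vertex the full classes must supply the degree $24$.

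Next I would inspect the shape of a full parallel class on $20$ vertices. Since $|H|=8$, every full $C_4$-class must decompose into four bipartite $4$-cycles together with one $V_1$-internal $4$-cycle, contributing exactly $4$ internal edges. Every full $K_{1,3}$-class decomposes into a mixture of the five possible star types (centre in $V_1$ with $0,1,2,3$ leaves in $H$, or centre in $H$ with three leaves in $V_1$), and the census $(a,b,c,d,e)$ is constrained by linear equations for block count ($=5$), $V_1$-count ($=12$), $H$-count ($=8$), internal edges ($=3$) and bipartite edges ($=12$); these admit several non-negative solutions, e.g.\ $(a,b,c,d,e)=(1,0,0,2,2)$. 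Because a triple of full $C_4$-classes and a quadruple of full $K_{1,3}$-classes both contribute $12$ internal plus $48$ bipartite edges, the five $(\bar r,\bar s)$ profiles can be obtained from a single base by repeatedly performing the swap $(\bar r,\bar s)\to(\bar r-3,\bar s+4)$.

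The main construction step is therefore to exhibit one explicit base IURD on $V_1=\mathbb{Z}_{12}$ and $H=\{\infty_1,\ldots,\infty_8\}$ by building a $C_4$-decomposition of $2K_{12,8}$ into twelve ``holey'' parallel classes each covering $H$ entirely and omitting a specific $4$-subset of $V_1$, and then attaching to each holey class a $4$-cycle on its missing quadruple. The union $F$ of these twelve internal $4$-cycles is an $8$-regular $48$-edge spanning subgraph of $2K_{12}$, and one must choose $F$ so that $2K_{12}\setminus F$ is a $14$-regular graph on which all three partial $(C_4,K_{1,3})$-URDs corresponding to $(7,0),(4,4),(1,8)$ can be placed (paralleling Lemma \ref{lemmaA2}). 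The main obstacle is precisely this joint compatibility: one base must simultaneously support the bipartite decomposition, the twelve $V_1$-internal $4$-cycles, the three partial $(r,s)$-decompositions on $2K_{12}\setminus F$, and the $C_4$-to-$K_{1,3}$ swap that converts it to the other four full profiles. In practice, this is most cleanly done by writing out explicit cyclic base blocks on $\mathbb{Z}_{12}\cup H$ and verifying the five $(\bar r,\bar s)$ variants by inspection, in the style of Lemmas \ref{lemmaA3} and \ref{lemmaA4}; the resulting argument is short but computationally dense, consisting of a table of base blocks together with an enumeration of the partial and swapped full classes.
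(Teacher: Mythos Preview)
Your proposal is a strategy outline, not a proof: you correctly set up the edge bookkeeping, identify the compatibility constraint as the crux, and then defer the actual construction to ``writing out explicit cyclic base blocks\ldots\ by inspection'' without supplying those blocks. For a lemma of this type the explicit data \emph{is} the proof, so what you have submitted is a promissory note rather than an argument. In particular, you have not exhibited (i) the holey $C_4$-factorization of $2K_{12,8}$ into twelve classes each missing a prescribed $4$-subset of $V_1$, (ii) the $8$-regular subgraph $F\subset 2K_{12}$ together with the three partial $(r,s)$-resolutions of $2K_{12}\setminus F$, or (iii) the edge-identical swap of three full $C_4$-classes for four full $K_{1,3}$-classes. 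Each of these is plausible but none is automatic, and the ``joint compatibility'' you flag as the main obstacle is exactly the part you have not resolved.

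There is also a small structural slip. You assert that every full $C_4$-class must consist of four bipartite $4$-cycles plus one $V_1$-internal $4$-cycle. In fact the linear system admits a second solution: three bipartite cycles and two cycles each meeting $H$ in a single vertex (type $(x_0,x_1,x_2)=(0,2,3)$). Both types contribute $4$ internal and $16$ bipartite edges, so your arithmetic survives, but the claimed uniqueness is false and restricts your search space unnecessarily.

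For comparison, the paper's proof is a direct explicit construction on $\mathbb{Z}_{20}$ with hole $\{0,\dots,7\}$: it lists three partial resolutions on $\{8,\dots,19\}$ that cover the \emph{same} $84$-edge subgraph (so the $(r,s)$ choices are interchangeable), and then lists the full classes for each $(\bar r,\bar s)$ with heavy reuse---the $C_4$-classes $C_1,\dots,C_9$ and the star-classes $S_1,\dots,S_{12}$ are shared across profiles in a nested fashion, which is exactly your ``swap'' idea realised concretely. Notably, the paper's full $C_4$-classes are mostly of the $(0,2,3)$ type you excluded. Your conceptual decomposition into $2K_{12,8}$ plus an internal $8$-regular $F$ is cleaner to think about, but until you produce the blocks (or cite a lemma guaranteeing the holey bipartite $C_4$-factorization and the compatible $F$), the paper's brute-force listing remains the only complete argument.
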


\begin{proof}
Let the point set of $K_{20}$ be $\mathbb{Z}_{20}$ and the point set
$\{0,1,\ldots,7\}$ be the hole. The following resolution classes
$(7,0), (4,4), (1,8)$ cover the same edges on the point set
$K_{20}-K_8$.

\begin{itemize}

\item The parallel classes of resolution $(7,0)$:\\
$\{(8,9,11,10),(12,13,15,14),(16,17,19,18)\}$,\\
$\{(8,11,15,12),(9,10,19,16),(13,14,18,17)\}$,\\
$\{(8,13,10,15),(9,18,11,19),(12,16,14,17)\}$,\\
$\{(8,14,16,10),(12,18,15,9),(11,17,19,13)\}$,\\
$\{(8,16,15,12),(14,10,19,11),(18,9,13,17)\}$,\\
$\{(8,18,10,15),(14,13,16,19),(12,11,9,17)\}$,\\
$\{(8,9,15,14),(10,11,17,16),(12,13,19,18)\}$.

\item The parallel classes of resolution $(4,4)$ are the last $4$ parallel classes in resolution $(7,0)$
above and the following $4$ parallel classes of $3$-stars:\\
$\{(8;9,10,11), (14;12,13,15), (19;16,17,18)\}$, \\
$\{(9;10,11,19), (15;8,12,13), (17;14,16,18)\}$,\\
$\{(10;11,15,19), (12;8,13,17), (18;9,14,16)\}$, \\
$\{(11;15,18,19), (13;8,10,17), (16;9,12,14)\}$.

\item The parallel classes of resolution $(1,8)$ are the last parallel class in resolution $(7,0)$
above and the following $8$ parallel classes of $3$-stars:\\
$\{(8;9,10,11),(12;13,14,15),(16;17,18,19)\}$,\\
$\{(8;10,12,13),(9;11,15,16),(14;17,18,19)\}$,\\
$\{(9;10,11,12),(13;14,17,19),(15;8,16,18)\}$,\\
$\{(10;11,13,19),(14;8,15,16), (17;9,12,18)\}$,\\
$\{(10;14,15,16),(17;11,12,13),(18;8,9,19)\}$,\\
$\{(11;12,14,18),(15;8,10,13),(19;9,16,17)\}$,\\
$\{(11;13,15,19),(16;8,12,14),(18;9,10,17)\}$,\\
$\{(12;8,15,18),(13;9,14,16),(19;10,11,17)\}$.

\end{itemize}

Now all the edges that are not covered in the above resolution
classes on the point set $K_{20}-K_8$ will be covered by the
following full parallel classes on the point set $K_{20}$.

\begin{itemize}

\item The parallel classes of resolution $(12,0)$:\\

$\{(0,8,1,9),(2,10,3,11),(4,12,5,13),(6,14,18,16),(7,17,15,19)\}$,\\
$\{(0,8,1,9),(2,10,3,11),(4,12,5,13),(6,16,7,18),(14,17,15,19)\}$,\\
$\{(0,10,1,11),(2,8,3,9),(4,14,12,16),(5,15,13,18),(6,17,7,19)\}$,\\
$\{(0,10,1 11),(2,8,3,9),(4,17,5,19),(6,12,7,14),(13,16,15,18)\}$,\\
$\{(0,12,1,13),(2,14,3,15),(4,8,16,9),(5,17,6,19),(7,10,18,11)\}$,\\
$\{(0,12,1,13),(2,14,11,16),(3,15,4,17),(5,8,19,9),(6,10,7,18)\}$,\\
$\{(0,14,1,15),(2,18,3,19),(4,9,7,16),(5,8,17,10),(6,12,11,13)\}$,\\
$\{(0,14,1,15),(2,13,8,17),(3,16,5,18),(4,10,12,19),(6,9,7,11)\}$,\\
$\{(0,16,1,17),(2,15,4,18),(3,13,10,14),(5,9,6,11),(7,8,19,12)\}$,\\
$\{(0,16,1,17),(2,12,3,19),(4,11,8,18),(5,10,9,14),(6,13,7,15)\}$,\\
$\{(0,18,1,19),(2,13,3,16),(4,11,5,14),(6,8,7,15),(9,12,10,17)\}$,\\
$\{(0,18,1,19),(2,12,3,17),(4,8,6,10),(5,15,11,16),(7,13,9,14)\}$.

\item The parallel classes of resolution $(9,4)$ are the following $9$ parallel classes of $4$-cycles:\\
$C_1=\{(0,10,1,11),(2,8,3,9),(4,17,5,19),(6,12,7,14),(13,16,15,18)\}$,\\
$C_2=\{(0,12,1,13),(2,14,3,15),(4,8,16,9),(5,17,6,19),(7,10,18,11)\}$,\\
$C_3=\{(0,14,1,15),(2,12,11,16),(3,13,6,18),(4,17,8,19),(5,9,7,10)\}$,\\
$C_4=\{(0,10,1,11),(2,8,3,9),(4,14,12,16),(5,15,13,18),(6,17,7,19)\}$,\\
$C_5=\{(0,18,1,19),(2,13,3,17),(4,8,11,16),(5,10,6,15),(7,12,9,14)\}$\\
$C_6=\{(0,8,1,9),(2,10,3,11),(4,12,5,13),(6,14,18,16),(7,17,15,19)\}$,\\
$C_7= \{(0,8,1,9),(2,10,3,11),(4,12,5,13),(6,16,7,18),(14,17,15,19)\}$,\\
$C_8=\{(0,12,1,13),(2,14,3,15),(4,10,6,11),(5,16,7,18),(8,17,9,19)\}$,\\
$C_9=\{(0,14,4,18),(1,16,2,19),(3,12,10,17),(5,8,13,11),(6,9,7,15)\}$,\\
and the following $4$ parallel classes of $3$-stars:\\
$S_1=\{(0;15,16,17),(7;8,11,13),(12;6,10,19),(14;1,5,9),(18;2,3,4)\}$,\\
$S_2=\{(1;16,17,18),(4;9,10,15),(5;8,11,14),(13;2,6,7),(19;0,3 12)\}$,\\
$S_3=\{(2;12,18,19),(6;8,9,11),(10;13,14,17),(15;1,4,7),(16;0,3,5)\}$,\\
$S_4=\{(3;12,16,19),(8;6,7,18),(9;5,10,13),(11;4,14,15),(17;0,1,2)\}$.

\item The $6$ parallel classes of $4$-cycles of resolution $(6,8)$ are $C_1,C_2,C_3,C_4,C_5$ in the resolution $(9,4)$ above together with the following parallel class of $4$-cycles:\\
$\{(0,8,17,9),(1,12,4,13),(2,10,6,11),(3,14,19,15),(5,16,7,18)\}$
and the $8$ parallel classes of $3$-stars are $S_1,S_2,S_3,S_4$ in the resolution $(9,4)$ above together with the following $4$ parallel classes of $3$-stars:\\
$S_5=\{(0;9,14,18),(5;11,12,13),(10;3,4,17),(16;1,2,6),(19;7,8,15)\}$,\\
$S_6=\{(1;8,9,19),(7;15,16,18),(11;2,3,13),(12;0,5,10),(14;4,6,17)\}$,\\
$S_7=\{(2;10,14,19),(4;11,12,13),(6;9,16,18),(8;0,1,5),(17;3,7,15)\}$,\\
$S_8=\{(3;10,11,12),(9;1,7,19),(13;0,5,8),(15;2,6,17),(18;4,14,16)\}$.

\item The $3$ parallel classes of $4$-cycles of resolution $(3,12)$ are $C_1,C_2,C_3$ in the resolution $(9,4)$ and the $12$ parallel classes of $3$-stars are $S_1,S_2,S_3,S_4$ in
the resolution $(9,4)$, $S_5,S_6,S_7,S_8$ in
the resolution $(6,8)$ above together with the following $4$ parallel classes of $3$-stars:\\
$S_9=\{(0;9,10,18),(1;11,12,13),(4;8,14,16),(15;3,5,19),(17;2,6,7)\}$,\\
$S_{10}=\{(2;9,10,11),(3;8,13,17),(5;15,16,18),(12;4,7,14),(19;0,1,6)\}$,\\
$S_{11}=\{(6;10,11,15),(8;0,2,17),(14;3,9,19),(16;4,7,12),(18;1,5,13)\}$,\\
$S_{12}=\{(7;14,18,19),(9;3,12,17),(10;1,5,6),(11;0,8,16),(13;2,4,15)\}$.

\item The $16$ parallel classes of $3$-stars of resolution $(0,16)$ are $S_i$, $i=2,3,...,12$ above together with the following $5$ parallel classes of $3$-stars:\\
$\{(0;10,11,17),(2;8,12,15),(13;7,16,18),(14;1,3,9),(19;4,5,6)\}$,\\
$\{(0;13,15,16),(5;14,17,19),(7;8,9,10),(12;1,6,11),(18;2,3,4)\}$,\\
$\{(1;10,13,14),(12;0,7,19),(16;2,8,9),(17;4,5,6),(18;3,11,15)\}$,\\
$\{(3;8,13,15),(4;9,17,19),(10;5,12,18),(11;1,7,16),(14;0,2,6)\}$,\\
$\{(6;12,13,18),(7;10,11,14),(8;4,17,19),(9;2,3,5),(15;0,1,16)\}$.

\end{itemize}
\end{proof}

\begin{lemma}
\label{lemmaA12} $J(2K_{20};C_4,K_{1,3})$ = $I(20)$.
\end{lemma}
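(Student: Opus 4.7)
The necessity direction $J(2K_{20};C_4,K_{1,3}) \subseteq I(20)$ is already given by Lemma \ref{lemmaP1}. For the inclusion $I(20) \subseteq J(2K_{20};C_4,K_{1,3})$, the plan is to combine Lemma \ref{lemmaA7} with Lemma \ref{lemmaA3} by filling in the hole. Since $20 \equiv 8 \pmod{12}$, Table 1 gives
\[
I(20) = \{(19-3x,\,4x) : x = 0,1,2,3,4,5,6\},
\]
so there are exactly seven pairs to realize.

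The strategy is as follows. For a target pair $(r^*,s^*) \in I(20)$, first select $(r,s) \in \{(7,0),(4,4),(1,8)\}$ and $(\bar r,\bar s) \in \{(12,0),(9,4),(6,8),(3,12),(0,16)\}$ with $r + \bar r = r^*$ and $s + \bar s = s^*$. Take the $(C_4, K_{1,3})$-IURD$(2K_{20}-2K_8;[r,s],[\bar r,\bar s])$ given by Lemma \ref{lemmaA7}; its hole has $8$ points. On the hole place a $(C_4, K_{1,3})$-URD$(2K_8;r,s)$ from Lemma \ref{lemmaA3}, which exists precisely for these three values of $(r,s)$. Each partial $C_4$-class on the $12$ non-hole points (consisting of $3$ cycles) can then be paired with a $C_4$-parallel class of the hole (consisting of $2$ cycles) to form a full $C_4$-class on all $20$ points, and similarly each partial $K_{1,3}$-class combines with a $K_{1,3}$-class of the hole. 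Together with the $\bar r + \bar s$ full classes of the IURD, this produces a $(C_4, K_{1,3})$-URD$(2K_{20};r^*,s^*)$.

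It remains to verify that every pair in $I(20)$ is obtainable this way. The observation that streamlines the bookkeeping is that every pair in $\{(7,0),(4,4),(1,8)\}$ satisfies $r + 3s/4 = 7$ and every pair in $\{(12,0),(9,4),(6,8),(3,12),(0,16)\}$ satisfies $\bar r + 3\bar s/4 = 12$, so any sum automatically satisfies $r^* + 3s^*/4 = 19$, matching the shape $(19-3x,4x)$ of elements of $I(20)$. The allowed values of $s$ correspond to $x_1 \in \{0,1,2\}$ and those of $\bar s$ to $x_2 \in \{0,1,2,3,4\}$, so $x = x_1 + x_2$ ranges over the full set $\{0,1,\ldots,6\}$. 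This produces all seven target pairs and completes the proof.

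There is essentially no technical obstacle here: all the heavy lifting (explicit resolution classes filling the $12$ non-hole points, and the corresponding full classes on $20$ points) has already been performed in Lemma \ref{lemmaA7}, and the hole decompositions are supplied by Lemma \ref{lemmaA3}. The only point that requires care is the parameter-matching argument above, which ensures that the three admissible $(r,s)$ for the hole are exactly the three partial-class parameters furnished by the IURD, so the pairing is always feasible.
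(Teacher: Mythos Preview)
Your proposal is correct and follows exactly the paper's approach: fill the hole of the $(C_4,K_{1,3})$-IURD$(2K_{20}-2K_8;[r,s],[\bar r,\bar s])$ from Lemma~\ref{lemmaA7} with a $(C_4,K_{1,3})$-URD$(2K_8;r,s)$ from Lemma~\ref{lemmaA3}. The paper's proof is a single sentence and omits the parameter-matching verification you spell out, but the construction is identical.
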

\begin{proof}
Replace the hole of size $8$ in Lemma \ref{lemmaA7} by a $(C_4,
K_{1,3})$-URD$(2K_{8};r,s)$, with $(r,s)\in \{(7,0), (4,4), (1,8)\}$
which exists by Lemma \ref{lemmaA3}.
\end{proof}

\section{Main results}

\begin{lemma}
\label{lemmaC1} For every\/ $v\equiv 0\pmod{12}$ $I(v)\subseteq J(2K_v; C_4, K_{1,3})$.

\end{lemma}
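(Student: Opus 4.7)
The plan is to reduce to the small cases already treated and then to apply Lemma \ref{lemmaP2}. Write $v = 12u$. For $u \in \{1,2,3\}$ (that is, $v \in \{12,24,36\}$), the statement is exactly the content of Lemmas \ref{lemmaA2}, \ref{lemmaA5}, \ref{lemmaA6}. So it suffices to handle $u \geq 4$, and for this I would apply Lemma \ref{lemmaP2} with $g = 12$.

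For ingredient (1) of Lemma \ref{lemmaP2} I would invoke the well-known existence of a $4$-RGDD of type $12^u$ for every $u \geq 4$: the necessary conditions $12u \equiv 0 \pmod 4$ and $12(u-1) \equiv 0 \pmod 3$ are trivially satisfied, and the existence is in the Handbook \cite{CD}. Ingredient (2) is $J_1 = \{(3,0),(0,4)\}$ provided by Lemma \ref{lemmaA1}, and ingredient (3) is $J_2 = \{(11,0),(8,4),(5,8),(2,12)\}$ from Lemma \ref{lemmaA2}. The parameter supplied by Lemma \ref{lemmaP2} is $t = g(u-1)/3 = 4(u-1)$.

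What remains is the arithmetic verification $I(12u) \subseteq J_2 + t \ast J_1$. An element of $I(12u)$ has the form $(12u - 1 - 3x,\, 4x)$ with $0 \leq x \leq 4u - 1$, while a generic element of $J_2 + t \ast J_1$ is $(r_2 + 3a,\, s_2 + 4b)$ with $(r_2,s_2) \in J_2$ and $a + b = 4(u-1)$, $a,b \geq 0$. Matching the second coordinate forces $s_2 \equiv 0 \pmod 4$ with $s_2 \leq 4x$ and $b = x - s_2/4 \leq 4(u-1)$, and then the first coordinate comes out correctly because $r_2 + s_2 = 11 + s_2/4 \cdot (-3) + \ldots$, i.e.\ the total class count $12u - 1 + x$ is preserved. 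I would choose $s_2 = 0$ for $0 \leq x \leq 4u-4$, $s_2 = 4$ for $x = 4u-3$, $s_2 = 8$ for $x = 4u-2$, and $s_2 = 12$ for $x = 4u-1$; in each case both $b$ and $a = 4(u-1) - b$ are in the valid range and the $r$-coordinate matches.

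The main obstacle is really just bookkeeping at the two extremes of the range of $x$: when $x$ is tiny only $s_2 = 0$ is admissible, and when $x$ is close to $4u - 1$ only $s_2 = 12$ is admissible, so one must check that these boundary values still give an admissible $(a,b)$. The computation above takes care of this. No design-theoretic input beyond the three small cases, Lemma \ref{lemmaP2}, and the Handbook existence of $4$-RGDDs of type $12^u$ is needed.
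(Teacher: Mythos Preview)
Your proposal is correct and follows essentially the same approach as the paper: the small cases $v\in\{12,24,36\}$ are quoted from Lemmas \ref{lemmaA2}, \ref{lemmaA5}, \ref{lemmaA6}, and for $v\geq 48$ one applies Lemma \ref{lemmaP2} with $g=12$, a $4$-RGDD of type $12^{v/12}$ from \cite{CD}, $J_1=\{(3,0),(0,4)\}$, and $J_2=\{(11,0),(8,4),(5,8),(2,12)\}$, then checks the arithmetic $I(v)\subseteq J_2+t\ast J_1$. Your explicit case split on $s_2$ at the top of the range of $x$ is a bit more detailed than the paper's verification, but the argument is the same.
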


\begin{proof}

For $v=12,24,36$ the conclusion follows from Lemmas \ref{lemmaA2},
\ref{lemmaA5} and \ref{lemmaA6}. For $v\geq 48$ start with a
$4$-RGDD $G$ of type $12^{\frac{v}{12}}$ \cite{CD} and apply Lemma
\ref{lemmaP2} with $g=12, u= \frac{v}{12}$ and $t=\frac{(v-12)}{3}$
(The input designs are  a $(C_4, K_{1,3})$-URD$(2K_4;r,s)$, with
$(r,s)\in \{(3,0),(0,4)\}$, which exists by Lemma \ref{lemmaA1} and
a $(C_4, K_{1,3})$-URD$(2K_{12};r,s)$,  with $(r,s)\in\{(11,0),
(8,4), (5,8), (2,12)\}$, which exists by Lemma \ref{lemmaA2}). This
implies
$$J(2K_v; C_4, K_{1,3}) \supseteq \{\{(11,0), (8,4), (5,8), (2,12)\}+ \frac{(v-12)}{3}\ast\{(3,0),(0,4)\}\}.$$
Since $\frac{v-12}{3}\ast  \{(3,0),(0,4)\}$=$\{(v-12-3x, 4x),
x=0,\ldots,\frac{v-12}{3}\}$, it is easy to see that $\{\{(11,0),
(8,4), (5,8), (2,12)\}+ \frac{(v-12)}{6}\ast\{(3,0),
(0,4)\}\}$=$I(v)$. This completes the proof.

\end{proof}

\begin{lemma}
\label{lemmaC2} For every\/ $v\equiv 8\pmod{24}$ $I(v)\subseteq J(2K_v; C_4, K_{1,3})$.

\end{lemma}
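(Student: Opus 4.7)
The plan is to mirror the proof of Lemma \ref{lemmaC1} verbatim, but with a $4$-RGDD of type $8^u$ in place of $12^u$. First, the base case $v=8$ is already handled by Lemma \ref{lemmaA3}, which gives $J(2K_8;C_4,K_{1,3})=\{(7,0),(4,4),(1,8)\}=I(8)$.

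For $v\equiv 8\pmod{24}$ with $v\geq 32$, I would write $u=v/8$ and observe $u\equiv 1\pmod 3$ and $u\geq 4$. Under these arithmetic conditions a $4$-RGDD of type $8^u$ is known to exist (handbook \cite{CD}). I would then apply Lemma \ref{lemmaP2} with $g=8$, $u=v/8$, and $t=g(u-1)/3=(v-8)/3$, feeding in the $(C_4,K_{1,3})$-URD$(4,2;r_1,s_1)$ designs with $(r_1,s_1)\in\{(3,0),(0,4)\}$ from Lemma \ref{lemmaA1} and the $(C_4,K_{1,3})$-URD$(2K_8;r_2,s_2)$ designs with $(r_2,s_2)\in\{(7,0),(4,4),(1,8)\}$ from Lemma \ref{lemmaA3}. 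Lemma \ref{lemmaP2} then gives
$$J(2K_v;C_4,K_{1,3})\supseteq \{(7,0),(4,4),(1,8)\} + \tfrac{v-8}{3}\ast\{(3,0),(0,4)\}.$$

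The only remaining step is a routine set-theoretic verification that the right-hand side equals $I(v)$: the three small shifts $(7,0)$, $(4,4)$, $(1,8)$ contribute pairs of the form $(v-1-3x,4x)$ with $x$ ranging over $\{0,\ldots,\tfrac{v-8}{3}\}$, $\{1,\ldots,\tfrac{v-5}{3}\}$, and $\{2,\ldots,\tfrac{v-2}{3}\}$, respectively; these ranges overlap so their union is $\{0,1,\ldots,\tfrac{v-2}{3}\}$, which gives precisely $I(v)$ as defined in Table $1$ for $v\equiv 8\pmod{12}$.

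The main obstacle is really just securing the existence of the $4$-RGDD of type $8^u$ for $u\equiv 1\pmod 3$, $u\geq 4$; once that building block is accepted from \cite{CD}, no new small designs, frames, or holes-of-size-$8$ arguments are needed, and the proof is structurally identical to Lemma \ref{lemmaC1}.
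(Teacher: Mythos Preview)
Your proposal is correct and follows essentially the same approach as the paper: both handle $v=8$ via Lemma~\ref{lemmaA3}, and for $v\geq 32$ apply Lemma~\ref{lemmaP2} to a $4$-RGDD of type $8^{v/8}$ (cited from \cite{CD}) with the ingredient designs from Lemmas~\ref{lemmaA1} and~\ref{lemmaA3}. The paper simply writes ``Proceeding as in Lemma~\ref{lemmaC1} the result follows'' where you have spelled out the set-theoretic verification that the output equals $I(v)$.
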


\begin{proof}
For $v=8$ the result follows by Lemma \ref{lemmaA3}. For $v\geq32$,
start with a $4$-RGDD  of type $8^{\frac{v}{8}}$  \cite{CD} and
apply Lemma \ref{lemmaP2} with $g=8, u= \frac{v}{8}$ and
$t=\frac{(v-8)}{3}$ (The input designs are  a $(C_4,
K_{1,3})$-URD$(2K_4;r,s)$, with $(r,s)\in \{(3,0),(0,4)\}$, which
exists by Lemma \ref{lemmaA1} and a $(C_4,
K_{1,3})$-URD$(2K_8;r,s)$, with $(r,s)\in\{(7,0), (4,4), (1,8)\}$,
which exists by Lemma \ref{lemmaA3}). Proceeding as in Lemma
\ref{lemmaC1} the result follows.
\end{proof}

\begin{lemma}
\label{lemmaC3} For every\/ $v\equiv 4\pmod{12}$,$I(v)\subseteq J(2K_v; C_4, K_{1,3})$.
\end{lemma}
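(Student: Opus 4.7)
The statement concerns $v\equiv 4\pmod{12}$, so $v\in\{4,16,28,40,\ldots\}$. The base case $v=4$ is immediate from Lemma \ref{lemmaA1}, which already gives $J(2K_4;C_4,K_{1,3})=\{(3,0),(0,4)\}=I(4)$. For every other $v$ in this residue class the plan is to mimic the pattern of Lemmas \ref{lemmaC1} and \ref{lemmaC2}: combine a suitable $4$-RGDD with the two $(C_4,K_{1,3})$-URD$(2K_4;r,s)$'s supplied by Lemma \ref{lemmaA1} via the composition scheme of Lemma \ref{lemmaP2}.

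Concretely, for $v\geq 16$ with $v\equiv 4\pmod{12}$, set $u=v/4$, so $u\equiv 1\pmod 3$ and $u\geq 4$. A $4$-RGDD of type $4^{u}$ is known to exist under exactly this arithmetic condition (cited from \cite{CD}). I would then invoke Lemma \ref{lemmaP2} with parameters $g=4$, $u=v/4$, and therefore $t=g(u-1)/3=(v-4)/3$, using as inputs the two designs $(C_4,K_{1,3})$-URD$(2K_4;r,s)$ with $(r,s)\in J_1=J_2=\{(3,0),(0,4)\}$ provided by Lemma \ref{lemmaA1}. This will produce a $(C_4,K_{1,3})$-URD$(2K_v;r,s)$ for every
\[
(r,s)\in\{(3,0),(0,4)\}+\tfrac{v-4}{3}\ast\{(3,0),(0,4)\}.
\]

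The final step is the routine arithmetic verification that this Minkowski-type sum equals $I(v)$. Writing a general element of $\frac{v-4}{3}\ast\{(3,0),(0,4)\}$ as $(3k,4(t-k))$ for $0\leq k\leq t=(v-4)/3$ and then adding either $(3,0)$ or $(0,4)$ one checks that the combined set is $\{(3j,4(\frac{v-1}{3}-j)):j=0,1,\ldots,\frac{v-1}{3}\}$, which is exactly the list $\{(v-1-3x,4x):x=0,1,\ldots,\frac{v-1}{3}\}$ prescribed by the $v\equiv 4\pmod{12}$ row of Table 1.

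I do not expect any real obstacle. The only points requiring care are (i) confirming that the ingredient $4$-RGDD of type $4^{v/4}$ is indeed available for every $u\equiv 1\pmod 3$, $u\geq 4$ (this is a classical result available from the handbook reference), and (ii) carrying out the index bookkeeping to see that the composition exhausts all pairs of $I(v)$ and that $v=16$ (where $u=4$) is not a degenerate case excluded by the RGDD literature. Once these two items are in place, the proof is a direct application of Lemma \ref{lemmaP2} parallel to Lemmas \ref{lemmaC1} and \ref{lemmaC2}.
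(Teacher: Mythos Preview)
Your proposal is correct and follows essentially the same approach as the paper: handle $v=4$ via Lemma~\ref{lemmaA1}, and for $v\geq 16$ apply Lemma~\ref{lemmaP2} to a $4$-RGDD of type $4^{v/4}$ with $g=4$, $u=v/4$, $t=(v-4)/3$, using the $(C_4,K_{1,3})$-URD$(2K_4;r,s)$ from Lemma~\ref{lemmaA1} as both input designs. The paper's proof is in fact slightly terser than yours, simply citing \cite{CD} for the RGDD and deferring the arithmetic check to ``proceeding as in Lemma~\ref{lemmaC1}.''
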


\begin{proof}
For $v=4$ the result follows by Lemma \ref{lemmaA1}. For $v\geq16$,
start with a $4$-RGDD  of type $4^{\frac{v}{4}}$  \cite{CD} and
apply Lemma \ref{lemmaP2} with $g=4, u= \frac{v}{4}$ and
$t=\frac{v-4}{3}$ (The input design is  a $(C_4,
K_{1,3})$-URD$(2K_4;r,s)$, with $(r,s)\in \{(3,0),(0,4)\}$, which
exists by Lemma \ref{lemmaA1}). Proceeding as in Lemma \ref{lemmaC1}
the result follows.

\end{proof}

\begin{lemma}
\label{lemmaC4} For every\/ $v\equiv 20\pmod{24}$, $I(v)\subseteq
J(2K_v; C_4, K_{1,3})$.
\end{lemma}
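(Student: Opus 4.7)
The case $v=20$ is Lemma~\ref{lemmaA12}. For $v\geq 44$ with $v\equiv 20\pmod{24}$, a direct application of Lemma~\ref{lemmaP2} via a $4$-RGDD of type $g^u$ with $gu=v$ is unavailable: the natural candidate $4^{v/4}$ fails the necessary condition $u\equiv 1\pmod 3$, and since $v\equiv 4\pmod 8$ and $v\equiv 8\pmod{12}$, neither $g=8$ nor $g=12$ divides $v$. My plan is therefore to carve off a ``hole'' of size $20$, build an incomplete design on the complement, and fill the hole using Lemma~\ref{lemmaA12}.

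Writing $v=20+24k$ with $k\geq 1$, I would first construct, for each admissible parameter pair $(r_1,s_1,\bar r_2,\bar s_2)$, a $(C_4,K_{1,3})$-IURD$(2K_v-2K_{20};[r_1,s_1],[\bar r_2,\bar s_2])$. For $k\geq 4$, this comes from a $4$-RGDD of type $24^k$ on the $v-20$ non-hole points (available in \cite{CD}): adjoin the $20$ hole points as ``infinite'' points, inflate each $K_4$-block to a $(C_4,K_{1,3})$-URD$(2K_4;r,s)$ from Lemma~\ref{lemmaA1} exactly as in the proof of Lemma~\ref{lemmaP2}, and organize the edges joining the hole to its complement into partial parallel classes patterned after Lemma~\ref{lemmaA7}. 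Filling the hole with a $(C_4,K_{1,3})$-URD$(2K_{20};r_3,s_3)$ from Lemma~\ref{lemmaA12} then yields a $(C_4,K_{1,3})$-URD$(2K_v;r,s)$ with $(r,s)=(r_1+\bar r_2+r_3,\,s_1+\bar s_2+s_3)$.

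An arithmetic check in the same style as Lemmas~\ref{lemmaC1}--\ref{lemmaC3} then confirms that the pairs $(r,s)$ attainable in this way form exactly
\[
I(20) + t \ast \{(3,0),(0,4)\} \;=\; I(v),
\]
where $t$ is the number of full parallel classes produced outside the hole. The principal obstacle is the sporadic small cases $v=44,\,68,\,92$, where no $4$-RGDD of type $24^k$ is available (too few groups to carry a $K_4$-resolution). For these I would expect explicit IURD constructions analogous to Lemma~\ref{lemmaA7}, produced by listing parallel classes by hand; the work is tedious but fits the established template and introduces no new ideas. All remaining steps are routine applications of the machinery already built.
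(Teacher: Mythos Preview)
Your proposal has a genuine gap: the construction of the claimed $(C_4,K_{1,3})$-IURD$(2K_v-2K_{20};[r_1,s_1],[\bar r_2,\bar s_2])$ is not actually carried out. Inflating the blocks of a $4$-RGDD of type $24^{k}$ via Lemma~\ref{lemmaA1} covers only the \emph{cross-group} edges among the $24k$ non-hole points and produces only $8(k-1)$ parallel classes worth of $\{(3,0),(0,4)\}$; you still have to account for (i) the edges inside each group of size $24$ and (ii) all edges between the $20$-point hole and the non-hole points, and you must organise these into full classes on $v$ points together with partial classes on $v-20$ points whose parameters line up with $I(20)$. The phrase ``organize the edges joining the hole to its complement into partial parallel classes patterned after Lemma~\ref{lemmaA7}'' does not supply a construction: an RGDD has no holey/frame structure, so there is no distinguished group to attach the hole to when forming full classes, and no ingredient analogous to Lemma~\ref{lemmaA7} (which concerns a hole of size $8$ inside $2K_{20}$, not a hole of size $20$ inside $2K_{44}$) is available. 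Your arithmetic check $I(20)+t\ast\{(3,0),(0,4)\}=I(v)$ requires $t=(v-20)/3=8k$, but the RGDD only furnishes $8(k-1)$; the missing $8$ must come from exactly the pieces you have not constructed. Finally, deferring $v=44,68,92$ to ``explicit IURD constructions by hand'' is not realistic at that scale and is, in any case, not provided.

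The paper avoids all of this by using a hole of size $8$ rather than $20$ and a \emph{frame} rather than an RGDD. One takes a $2$-frame of type $1^{(v-8)/12}$ (a near-$1$-factorization of $K_{(v-8)/12}$, which exists since $(v-8)/12$ is odd), inflates each point by $12$, and adjoins an $8$-point hole $H$. Each partial factor $P_i$ of the frame misses one group $G_i$; its edges blow up to copies of a $(C_4,K_{1,3})$-URGDD$(2;r,s)$ of type $12^{2}$ from Lemma~\ref{lemmaA4}, and these are completed to full classes on $V$ by the \emph{full} classes of a $(C_4,K_{1,3})$-IURD$(2K_{20}-2K_{8})$ placed on $H\cup(G_i\times\{1,\dots,12\})$ (Lemma~\ref{lemmaA7}). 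The \emph{partial} classes of those IURDs, together with a fill of $H$ by Lemma~\ref{lemmaA3}, give the remaining classes. This is uniform for all $v\ge 44$ in the residue class, with no sporadic cases, and the only nontrivial explicit ingredient is the already-established Lemma~\ref{lemmaA7}.
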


\begin{proof}
The case $v=20$ follows by Lemma \ref{lemmaA12}. For $v\geq44$
start from a   $2$-frame $\cF$ of type $1^{\frac{v-8}{12}}$ with
groups $G_i$, $i=1,2,\ldots,\frac{v-8}{12}$ \cite{CD}. Then expand
each point by $12$ points and add a set $H=\{a_1,a_2,\ldots,a_8\}$.
For $i=1,2,\ldots,\frac{v-8}{12}$,  let $P_{i}$ be the partial
factor which miss the group $G_i$.

Replace  each block $b\in P_{i}$ by a $(C_4,K_{1,3})$-URGDD$(2;
r_1,s_1)$ of type $12^2$ and index 2, say $D_{i}^b$  on the vetex
set of $b\times \{1,2,...,12\}$ with $(r_1,s_1)\in \{(12,0), (6,8),
(0,16)\}$, which exists by Lemma \ref{lemmaA4}.

For $i=1,2,\ldots,\frac{v-8}{12}$ place on $H\cup
(G_i\times\{1,2,\ldots,12\})$ a copy of a $(C_4,
K_{1,3})$-IURD$(2K_{20}-2K_8;[x_1,y_1],[x,y])$, say $D_i$  with
$(x_1,y_1)\in \{(7,0), (4,4), (1,8)\}$ and $(x,y)\in \{(12,0),
 (6,8), (0,16)\}$, which exists by Lemma \ref{lemmaA7}.
Combine the parallel classes of $D_{i}^b$ with the full classes of
$D_i$ so to obtain $r_2$ classes of $C_4$ and $s_2$ classes of
$K_{1,3}$ with $(r_2,s_2)\in \{(\frac{v-8}{12})\ast \{(12,0), (6,8),
(0,16)\}\}$.

Fill the hole $H$ with a copy of a $(C_4, K_{1,3})$-URD$(2K_8;r,s)$
say $D$ with $(r_4,s_4)\in\{(7,0), (4,4), (1,8)\}$, which exists by
Lemma \ref{lemmaA3}. Combine the classes of $D$ with the partial of
$D_i$ so to obtain $r_4$ classes of $C_4$ and $s_4$ classes of
$K_{1,3}$ with $(r_4,s_4)\in\{(7,0), (4,4), (1,8)\}$.

This gives a $(C_4, K_{1,3})$-URD$(2K_v;r,s)$, with $(r,s)\in
\{\{(7,0), (4,4), (1,8)\}+ \frac{((v-8)}{12}\ast\{(12,0), (6,8),
(0,16)\}\}$. Proceeding as in Lemma \ref{lemmaC1} we obtain the
result.

\end{proof}

Combining Lemmas   \ref{lemmaC1}, \ref{lemmaC2}, \ref{lemmaC3} and
\ref{lemmaC4} we obtain the main theorem of this article.

\begin{thm}
For each\/  $v\equiv 0\pmod{4}$, we have $J(2K_v; C_4,
K_{1,3})$=$I(v)$.
\end{thm}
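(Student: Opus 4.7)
The plan is to establish the two inclusions $J(2K_v;C_4,K_{1,3})\subseteq I(v)$ and $I(v)\subseteq J(2K_v;C_4,K_{1,3})$ separately. The first inclusion is handled directly by Lemma \ref{lemmaP1}, which shows that any $(C_4,K_{1,3})$-URD$(v,2;r,s)$ with $r,s>0$ must satisfy $v\equiv 0\pmod 4$ and $(r,s)\in I(v)$; the boundary cases $s=0$ (a pure $C_4$-factorization of $2K_v$) and $r=0$ (a pure $K_{1,3}$-factorization of $2K_v$) lie at the endpoints of the range parametrizing $I(v)$ and need no separate argument beyond the existence of these extremal resolutions.

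For the reverse inclusion I would reduce to cases according to $v\pmod{24}$. Since $v\equiv 0\pmod 4$, the possible residues are $0,4,8,12,16,20\pmod{24}$, which group into four subcases each handled by one of the four sufficiency lemmas already proved: $v\equiv 0,12\pmod{24}$ means $v\equiv 0\pmod{12}$ and is covered by Lemma \ref{lemmaC1}; $v\equiv 4,16\pmod{24}$ means $v\equiv 4\pmod{12}$ and is covered by Lemma \ref{lemmaC3}; $v\equiv 8\pmod{24}$ is covered by Lemma \ref{lemmaC2}; and $v\equiv 20\pmod{24}$ is covered by Lemma \ref{lemmaC4}. Each of these lemmas asserts exactly $I(v)\subseteq J(2K_v;C_4,K_{1,3})$ for the relevant residue class, so the union of their conclusions gives the desired inclusion for every $v\equiv 0\pmod 4$.

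The only routine verification at this stage is that the four residue classes exhaust $\{v\geq 4:v\equiv 0\pmod 4\}$, which is a short arithmetic check. No new combinatorial construction is required at the level of the Main Theorem itself, since the genuinely nontrivial work has already been discharged inside the sufficiency lemmas: the case $v\equiv 20\pmod{24}$ needed the frame-based construction built on the incomplete design of Lemma \ref{lemmaA7}, while the other three residue classes reduce via the $4$-RGDD recursion of Lemma \ref{lemmaP2} to the small-case designs of Lemmas \ref{lemmaA1}--\ref{lemmaA3}. Consequently, I expect the main obstacle to be purely bookkeeping — confirming exhaustiveness of the case split and that the two inclusions match exactly on the endpoints of the ranges for $x$ in Table~1.
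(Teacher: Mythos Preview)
Your proposal is correct and follows essentially the same approach as the paper: the paper's own proof is a single sentence combining Lemmas \ref{lemmaC1}--\ref{lemmaC4} for sufficiency, with necessity implicitly supplied by Lemma \ref{lemmaP1}. Your write-up simply makes explicit the residue-class bookkeeping and the role of the boundary pairs, which the paper leaves tacit.
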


\end{document}